\newtheorem{theorem}{{\sc Theorem}}[section]
\newtheorem{lemma}[theorem]{{\sc Lemma}}
\theoremstyle{remark}
\newtheorem{remark}[theorem]{{\sc Remark}}
\theoremstyle{definition}
\newcommand{\R}{\mathbb{R} }
\newcommand{\N}{\mathbb{N} }
\newcommand{\B}{\mathcal{B}}
\newcommand{\F}{\mathcal{F}}
\newcommand{\calN}{\mathcal{N}}
\newcommand{\Prob}{\mathbb{P}}
\newcommand{\E}{\mathbb{E}}
\newcommand{\la}{\lambda}
\newcommand{\Om}{\Omega}
\providecommand{\babs}[1]{\bigl\lvert #1\bigr\rvert}
\providecommand{\norm}[1]{\lVert #1\rVert}
\DeclareMathOperator{\Var}{Var}
\DeclareMathOperator{\supp}{supp}
\DeclareMathOperator{\HS}{H.S.}
\DeclareMathOperator{\TV}{TV}
\DeclareMathOperator{\esssup}{ess\,sup}
\DeclareMathOperator{\essinf}{ess\,inf}
\renewcommand{\phi}{\varphi}
\renewcommand{\epsilon}{\varepsilon}
\renewcommand{\rho}{\varrho}
\renewcommand{\P}{\Prob}
\renewcommand{\1}{\mathds{1}}
\begin{document}
\title[Existence and uniqueness of Stein kernels]{On existence and uniqueness of univariate Stein kernels}
\author{Christian D\"obler}
\thanks{\noindent Mathematisches Institut der Heinrich Heine Universit\"{a}t D\"usseldorf\\
Email: christian.doebler@hhu.de\\
{\it Keywords: Stein kernel; Stein's method; Total variation distance; Central limit theorem; Exchangeable pairs; Lebesgue decomposition; Non-zero-bias transformation   } }
\begin{abstract}  
We completely characterize the class of univariate distributions allowing for a Stein kernel and illustrate our result by means of some concrete distributions. 
Moreover, we apply our findings to prove a quantitative version of the central limit theorem with optimal rate $n^{-1/2}$ in total variation distance for i.i.d. random variables whose distribution belongs to that class.
\end{abstract}

\maketitle

\section{Introduction and results}\label{intro}
Stein's method, originating from the works \cite{St72,St86}, is a collection of tools and techniques for estimating the distance between two probability distributions and for proving quantitative limit theorems, which has seen a vast amount of theoretical developments and many important applications in recent years. One particular and central object of study in this context is the so-called \textit{Stein kernel} associated to a probability distribution $\mu$ on $\R$, which has already been studied in Lecture VI of Stein's classical monograph \cite{St86}. 

Suppose that $X$ is a real-valued random variable on a probability space $(\Om,\F,\P)$ and with distribution $\mu:=\P_X$ on $(\R,\B(\R))$, where $\B(\R)$ denotes the $\sigma$-field of Borel subsets of $\R$. We further assume that $\E|X|<\infty$ and denote by $m:=\E[X]\in\R$ the mean of $X$. By definition, a Borel-measurable function $\tau_X:=\tau_\mu:\R\rightarrow\R$ is called a \textit{Stein kernel} for $X$ (or rather for $\mu$), if
\begin{equation}\label{defsk}
\E\bigl[\tau_X(X)f'(X)\bigr]=\E\bigl[(X-m)f(X)\bigr]  
 \end{equation}
%\begin{equation*}
%\E\bigl[\tau_X(X)f'(X)\bigr]=\E\bigl[(X-m)f(X)\bigr]  
 %\end{equation*}
 holds for all, say, continuously differentiable functions $f$ on $\R$ with a bounded derivative $f'$. Taking $f(x)=x$ one observes that \eqref{defsk} necessitates that $\E[X^2]<\infty$ and it further immediately implies that $\E[\tau_X(X)]=\Var(X)=:\sigma^2$, whenever $\tau_X$ exists.%\smallskip\\

\subsection{Motivation for studying the Stein kernel} 
The existence of a Stein kernel for $X$ may directly be applied in order to estimate the total variation distance of $\mu$ to the normal distribution $\calN(m,\sigma^2)$ with mean $m$ and variance $\sigma^2$. Indeed, it is known \cite{LRS,LNP} that in this case one has the \textit{Stein discrepancy bound}
\begin{align}\label{dtvbound}
d_{\TV}\bigl(\mu,\calN(m,\sigma^2)\bigr)&:=\sup_{B\in \B(\R)}\babs{\mu(B)-\calN(m,\sigma^2)(B)}\leq2\E\babs{\tau_X(X)-\sigma^2}\notag\\
&\leq2\sqrt{\Var(\tau_X(X))}.
%\leq2\biggl(\E\Bigl[\bigl(\tau_X(X)-\sigma^2\bigr)^2\Bigr]\biggr)^{1/2}.
\end{align}   
Note that, in view of Stein's lemma \cite{St72}, a Stein kernel for $\calN(m,\sigma^2)$ is given by the constant $\sigma^2$, showing that \eqref{dtvbound} may give an accurate bound, whenever $\tau_X$ is close to the constant $\sigma^2$ in $L^1(\mu)$. 

It has further been shown in \cite{NP09} that whenever $X$ is a smooth, centered functional of an isonormal Gaussian process $\mathbb{X}$ over the real separable Hilbert space $\mathfrak{h}$, then a Stein kernel for $X$ exists and is given by 
\begin{equation}\label{skmall}
 \tau_X(x)=\E\bigl[\langle DX,-DL^{-1}X\rangle_\mathfrak{h}\,\bigl|\, X=x\bigr],\quad x\in\R.
\end{equation}
Here, $D$ is the Malliavin derivative and $L^{-1}$ is the pseudo-inverse of the Ornstein-Uhlenbeck generator associated to $\mathbb{X}$. Together with the inequality \eqref{dtvbound} this can be considered the starting point of the so-called \textit{Malliavin-Stein method} on Gaussian spaces (see \cite{NP09,NPbook} for details). A particular, finite-dimensional version of the formula \eqref{skmall} had previously been employed in the article \cite{Cha}.

In recent works \cite{LNP, Saumard, ERS, GS} the Stein kernel has further successfully been exploited in the context of certain functional inequalities, covariance identities and estimates and approximations of (weighted) Poincar\'{e} constants.   \smallskip\\

As has been shown in previous works by the author \cite{DoeBeta,P4,D0}, the Stein kernel further naturally arises in the context of the \textit{exchangeable pairs approach} as follows: Suppose that $(W,W')$ is an exchangeable pair of real-valued random variables on $(\Om,\F,\P)$, that is $(W,W')$ and $(W',W)$ have the same distribution, and that one would like to approximate the distribution $\P_W$ of $W$ by a ``simpler'' distribution $\mu$ of another random variable $Z$, which might still have to be found. To this end, suppose further that there exist a non-increasing, real-valued function $\gamma$, a positive function $\eta$, both defined on a real interval $J=(a,b)$ such that the support of $\P_W$ is contained in the closure $\bar{J}$ of $J$, as well as ``small'', random remainder terms $R$ and $S$ such that 
\begin{align*}
\E\bigl[W'-W\,\bigl|\, W\bigr]&=\lambda\gamma(W)+R,\\
\frac{1}{2\lambda}\E\bigl[(W'-W)^2\,\bigl|\, W\bigr]&=\eta(W)+S.
\end{align*}   
Here, $\lambda$ is a positive and typically small real number. Then, in Section 2 of \cite{DoeBeta} it is suggested to approximate $\P_W$ by the distribution $\mu$ on $\B(\R)$ with probability density function
\begin{equation}\label{epdens}
p(x)=\frac{C}{\eta(x)}\exp\biggl(\int_{x_0}^x\frac{\gamma(t)}{\eta(t)}dt\biggr), \quad x\in J,
\end{equation} 
where $C$ is the normalizing constant and $x_0$ is a zero of $\gamma$. In many examples, the exchangeable pair $(W,W')$ at hand in fact satisfies the (approximate) \textit{linear regression property} 
\begin{equation*}
\E\bigl[W'-W\,\bigl|\, W\bigr]=\lambda(m-W)+R,
\end{equation*}
where $m=\int_a^b tp(t)dt$ is the mean of the distribution $\mu$, that is, the function $\gamma$ is actually given by $\gamma(x)=m-x$. In this case, as is shown in \cite{DoeBeta}, the function $\eta$ equals the Stein kernel $\tau_\mu$ for $\mu$ and a \textit{Stein operator} for the distribution $\mu$ is given by the first order linear differential operator $L$ with
\[(Lg)(x)=\tau_\mu(x)g'(x)+(m-x)g(x).\]  
Hence, the Stein kernel appears as a coefficient in the \textit{Stein equation} corresponding to $\mu$ and might, thus, also be referred to as a \textit{Stein coefficient} in this context. In the special case where $X$ is a centered and smooth functional of an isonormal Gaussian process and for $\gamma(x)=x$, the formula \eqref{epdens} for the density of $X$ had already been derived in the article \cite{NV}.\medskip\\

\subsection{Examples and existence of Stein kernels}\label{setup}

After motivating the interest in the Stein kernel $\tau_X$, we now come to the mathematically natural question about its existence and uniqueness. Throughout, denote by 
\[a:=\essinf(X)=\sup\bigl\{t\in\R\,:\,\P(X<t)=0\bigr\}\in[-\infty,+\infty)\]
 and by 
\[b:=\esssup(X)=\inf\bigl\{t\in\R\,:\,\P(X>t)=0\bigr\}\in(-\infty,+\infty]\]
the \textit{essential infimum} and the \textit{essential supremum} of $X$, respectively. Moreover, define the real interval $I:=(a,b)$ as well as the topological closure $\bar{I}=\overline{(a,b)}\subseteq \R$ of $I$ in $\R$. Note that $I=\emptyset$ if $X$ is $\P$-almost surely constant.

 For our existence and uniqueness results we will rely on the well-known \textit{Lebesgue decomposition}
\[\mu=\mu_{ac}+\mu_{d}+\mu_{sc}=\mu_{ac}+\mu_s\]
of $\mu$. Here, $\mu_{ac}$ is the \textit{absolutely continuous part} of $\mu$ which has a density with respect to the Lebesgue measure $\la$ on $\B(\R)$, $\mu_d$ is its \textit{discrete (or purely atomic) part}, that is, there exist a countable, possibly empty set $D\subseteq\R$ and positive real numbers $p_x$, $x\in D$, such that
\[\mu_d=\sum_{x\in D} p_x\delta_x.\]
In particular, $\sum_{x\in D}p_x=\mu_d(\R)\leq1$. Finally, $\mu_{sc}$ is the \textit{singular continuous part} of $\mu$, that is, the function $\R\ni x\mapsto\mu_{sc}((-\infty,x])\in[0,1]$ is continuous but $\mu_{sc}$ and $\lambda$ are mutually singular, i.e. there is a set $A\in\B(\R)$ such that $\mu_{sc}(A)=0=\lambda(\R\setminus A)$. For convenience, we further denote $\mu_s:=\mu_d+\mu_{sc}$ the part of $\mu$ that is singular with respect to $\lambda$. Of course, $\mu_{ac}\leq\mu$ is always absolutely continuous with respect to $\mu$ and, hence, by the Radon-Nikodym theorem, there exists a Borel-measurable and $\mu$-integrable function $h:\R\rightarrow[0,1]$ such that $\mu_{ac}=h\mu$, that is
\begin{equation}\label{rnmu}
\mu_{ac}(B)=\int_B h(t)d\mu(t),\quad B\in\B(\R).
\end{equation}

For a certain class of absolutely continuous  distributions $\mu$, that is with $\mu=\mu_{ac}$, the existence of a Stein kernel has essentially been known since the appearance of \cite{St86} (see e.g. \cite{LRS, DoeBeta} for a thorough review of the relevant literature). To be precise, if $X$ possesses a probability density function $p$ that is strictly positive on the interval $I=(a,b)=(\essinf(X),\esssup(X))$, then the function
\begin{equation}\label{skac}
\tau_X(x)=\frac{1}{p(x)}\int_a^x (m-t)p(t)dt=-\frac{1}{p(x)}\int_x^b (m-t)p(t)dt,\quad x\in\R,
\end{equation}  
is a Stein kernel for $X$. Moreover, the Stein kernel for $X$ is $\mu$-a.e. uniquely determined in this case. 

It turns out, however, that absolute continuity with respect to $\la$ is not necessary for a Stein kernel for $\mu$ to exist. Indeed, if $\mu=\delta_c$ is the Dirac measure at some $c\in\R$, i.e. if $X\equiv c$ $\P$-a.s., then it is easy to see that the function $\tau_X:\R\rightarrow\R$ with $\tau_X(c)=0$ and $\tau_X(t)=\gamma(t)$, $t\in\R\setminus\{c\}$, for any Borel-measurable function $\gamma:\R\rightarrow\R$, defines a Stein kernel for $\mu$. Moreover, it is clear that letting $\tau_X(c)=0$ is necessary for 
$\tau_X$ to be a Stein kernel for $X$ . Note further that, generally, \eqref{defsk} may only impose conditions on the function $\tau_X$ on the topological support $\supp(\mu)$ of the measure $\mu$ (see Remark \ref{mtrem} (a) below for a reminder of the definition of $\supp(\mu)$). 

For a non-degenerate example in this direction let 
\[\mu:=\frac14\delta_1+\frac14\delta_{-1}+\frac12 \mathcal{U}([-1,1]),\]
where, for real numbers $c<d$, $\mathcal{U}([c,d])$ denotes the uniform distribution on the interval $[c,d]$. Then, it may be checked by elementary computations that a Stein kernel $\tau_\mu$ for $\mu$ is given by 
\[\tau_{\mu}(t)=\Bigl(1+\frac12\bigl(1-t^2\bigr) \Bigr)\1_{(-1,1)}(t).\]

However, not every distribution on $\R$ with a finite second moment allows for a Stein kernel. If $X$ is a symmetric Rademacher variable, for instance, that is, $\P(X=1)=\P(X=-1)=1/2$, then it is easy to see that there is no Stein kernel for $X$. Indeed, for any measurable function $\tau$ on $\R$ and every continuously differentiable function $f$ one has 
\[\E\bigl[Xf(X)\bigr]=\frac12\bigl(f(1)-f(-1)\bigr)\]%=\frac12\int_{-1}^1 f'(t)dt\]
and
\[\E\bigl[\tau(X)f'(X)\bigr]=\frac12\bigl(\tau(1)f'(1)-\tau(-1)f'(-1)\bigr).\]
Now, if $\tau$ were a Stein kernel for $X$, then choosing, successively, $f(x)=x$ and $f(x)=x^3$ would yield $\tau(1)-\tau(-1)=2$ and $\tau(1)-\tau(-1)=\frac23$, which cannot both hold at the same time. 
 In such a simple example, the non-existence of a Stein kernel is, of course, obvious but from our main result, Theorem \ref{maintheo} below, it will also follow that the Cantor distribution on $[0,1]$, like in fact any other non-degenerate singular distribution, has no Stein kernel either, which might be less obvious.

Moreover, not even every absolutely continuous distribution $\mu$ allows for a Stein kernel. If, for instance, 
\[\mu=\frac12\mathcal{U}([-2,-1])+\frac12\mathcal{U}([1,2]),\]
then $X\sim\mu$ satisfies $\E[X]=0$ and $\Var(X)=7/3$. Moreover, a partial integration yields that 
\[\E[Xf(X)]=-\E[X^2f'(X)]+\frac12\bigl(f(-1)-f(1)\bigr)+2\bigl(f(2)-f(-2)\bigr)\]
for any continuously differentiable function $f$ on $\R$. Hence, if $\tau_X$ were a Stein kernel for $X$, then one would have the identity
\[\E\Bigl[\bigl(X^2+\tau_X(X)\bigr)f'(X)\Bigr]=\frac12\bigl(f(-1)-f(1)\bigr)+2\bigl(f(2)-f(-2)\bigr)\]
for any continuously differentiable $f$. By a functional analytic argument this would imply that $\tau_X(t)=-t^2$ for $\lambda$-a.e. $t\in [-2,1]\cup[1,2]$. But then, by taking $f(x)=x$ it would follow that 
\[-\frac{7}{3}=\E[-X^2]=\E\bigl[\tau_X(X)\bigr]=\E[X^2]=\frac73,\]
which is impossible. Hence, a Stein kernel for $\mu$ cannot exist.

In view of the well-known formula \eqref{skac} and the positive and negative examples above, it is quite surprising that the general question of existence and uniqueness of univariate Stein kernels has not been addressed in the literature so far. 
In fact, most of the above cited references content themselves with distributions for which one of the representations \eqref{skmall} or \eqref{skac} is valid. It is the main purpose of this note to fill this gap in Stein's method theory.\smallskip\\

Before stating our main result, we would like to mention the important fact that the concept of a Stein kernel can by generalized to higher dimensions. Thus, if $\nu$ is a probability distribution on $(\R^d,\B(\R^d))$ such that $\int_{\R^d}\norm{x}^2d\nu(x)<\infty$, then a Stein kernel for $\nu$ is a matrix-valued, measurable function $\tau_\nu:\R^d\rightarrow\R^{d\times d}$ such that 
\begin{equation*}
\int_{\R^d}\langle x-a,f(x)\rangle d\nu(x)=\int_{\R^d}\langle \tau_\nu(x),Df(x)\rangle_{\HS} d\nu(x)
\end{equation*} 
holds for all smooth mappings $f:\R^d\rightarrow\R^d$ such that $\int_{\R^d}(\norm{f}^2+\norm{Df}^2_{\HS})d\nu<\infty$. Here, $a=\int_{\R^d}x d\nu(x)\in\R^d$ is the mean vector of $\nu$, $\HS$ refers to the Hilbert-Schmidt inner product and norm, respectively, and $\langle \cdot,\cdot\rangle$ denotes the standard inner product on $\R^d$. A slightly weaker definition assumes this identity only for gradient fields $f=\nabla g$, where $g$ is a twice continuously differentiable function $g$ on $\R^d$. In \cite{CFP} it has been shown that a probability distribution on $(\R^d,\B(\R^d))$ has a Stein kernel, whenever it is absolutely continuous with respect to the $d$-dimensional Lebesgue measure $\lambda^d$ on $\B(\R^d)$ %, has finite second moment, i.e. $\int_{\R^d}\norm{x}^2d\nu(x)<\infty$, 
 and satisfies a (converse weighted) Poincar\'{e} inequality. It should be noted that, contrary to the univariate case, for $d\geq2$, the existence of a Stein kernel is non-trivial even for absolutely continuous distributions $\nu$ with a smooth, log-concave density that is strictly positive on the support of $\nu$. This is mostly due to the fact that such a simple formula as \eqref{skac} does not seem to exist for $d\geq2$. In this context, it is worthwhile mentioning that the existence proof from \cite{CFP} invokes the Lax-Milgram theorem from functional analysis and is, hence, not constructive. Some important progress on the form of a Stein kernel for multivariate absolutely continuous distributions has been made in the recent interesting paper \cite{Fathi}, though. From Theorem \ref{maintheo} below, it might be possible to make an educated guess about a general existence criterion for a Stein kernel $\tau_\nu$ in arbitrary dimension and also about its general form. We prefer to leave this as an interesting problem for future work, however. In this regard it should further be mentioned that in (the arXiv version of) the article \cite{NPS1} it has been suggested that, for $d\geq2$, a Stein kernel for $\nu$ might no longer be a.e. unique.    %\medskip\\

 \subsection{Results}
 
 We now state the main result of this note, which completely characterizes all univariate distributions possessing a Stein kernel.

\begin{theorem}\label{maintheo}
Suppose that $X$ is a real-valued random variable, defined on a probability space $(\Om,\F,\P)$, with non-degenerate distribution $\mu=\P_W$ and such that $\E[X^2]<\infty$. Then, there exists a Stein kernel $\tau_X$ for $X$, if and only if the absolutely continuous part $\mu_{ac}$ of $\mu$ has a Lebesgue density $p=\frac{d\mu_{ac}}{d\lambda}$ which is strictly positive on $I$. In this case, with $m=\E[X]$ and $0<\sigma^2=\Var(X)$, the Stein kernel $\tau_X$ is $\mu$-a.e. unique and a version of it is given by  
\begin{equation}\label{formtau}
\tau_X(t)=\sigma^2\frac{q(t)}{p(t)}h(t),\quad t\in I,
\end{equation}
and $\tau_X(t)=0$ for $t\in\R\setminus I$. Here, $q$	is the probability density function of the $X$-non-zero biased distribution defined in \eqref{densnz} below and $h=\frac{d\mu_{ac}}{d\mu}$ is a Radon-Nikodym derivative of $\mu_{ac}$ with respect to $\mu$ as in \eqref{rnmu}. In particular, $\tau_X$ is $\lambda$-a.e. positive on $\bar{I}$. Moreover, it necessarily holds that $\tau_X(t)=0$ for $\mu_s$-a.e $t\in\R$. In particular, one has that $\tau_X(c)=0$, whenever $\P(X=c)>0$.
\end{theorem}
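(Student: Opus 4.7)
The plan is to translate the defining identity \eqref{defsk} for a Stein kernel into an equality of two finite signed measures on $\B(\R)$, after which existence, uniqueness and the explicit formula \eqref{formtau} all follow from the Lebesgue decomposition of $\mu$. To this end I would first introduce the auxiliary function
\[
\Psi(t):=\E\bigl[(X-m)\1_{\{X\geq t\}}\bigr],\quad t\in\R,
\]
and establish four properties: (i) $\Psi\geq 0$ pointwise, by splitting $t\geq m$ from $t<m$ and using $\E[X-m]=0$ to rewrite $\Psi(t)=\E[(m-X)\1_{\{X<t\}}]$ in the second case; (ii) $\Psi\equiv 0$ on $\R\setminus\bar I$; (iii) $\Psi>0$ on $I$; and (iv) $\int_\R\Psi\,d\lambda=\sigma^2$, via the Fubini identities $\int_m^\infty\1_{\{X\geq t\}}\,dt=(X-m)_+$ and $\int_{-\infty}^m\1_{\{X<t\}}\,dt=(m-X)_+$. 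Property (iv) identifies $q:=\Psi/\sigma^2$ with the non-zero-biased Lebesgue density of \eqref{densnz}. A second Fubini argument starting from $\E[(X-m)f(X)]=\E[(X-m)\int_m^X f'(t)\,dt]$ then yields the key identity
\[
\E\bigl[(X-m)f(X)\bigr] \;=\; \sigma^2\int_\R f'(t)\,q(t)\,d\lambda(t)
\]
for every continuously differentiable $f$ with bounded derivative.

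Combined with \eqref{defsk}, $\tau_X$ is a Stein kernel if and only if $\int_\R g\,\tau_X\,d\mu=\sigma^2\int_\R g\,q\,d\lambda$ holds for every $g=f'\in C_b(\R)$. Taking $f(x)=x$ in \eqref{defsk} forces $\tau_X\in L^1(\mu)$ with $\int\tau_X\,d\mu=\sigma^2$; moreover $C_b(\R)$ (equivalently, $C_0(\R)$ after truncation) is measure-determining for finite signed measures on $\B(\R)$. Hence the Stein kernel condition is equivalent to the single measure identity
\[
\tau_X\cdot\mu \;=\; \sigma^2\,q\cdot\lambda \qquad \text{on } \B(\R), \qquad (\ast)
\]
from which the claimed $\mu$-a.e.\ uniqueness of $\tau_X$ is immediate.

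For necessity of the strict positivity of $p$ on $I$, observe that $(\ast)$ gives $\sigma^2 q\,\lambda\ll\mu$, and since $q>0$ on $I$ by property (iii), this forces $\lambda|_I\ll\mu|_I$. Suppose, for contradiction, that $\{p=0\}\cap I$ has positive Lebesgue measure, and choose a $\lambda$-null Borel carrier $N$ of $\mu_s$. Then $A':=(\{p=0\}\cap I)\setminus N$ still has positive Lebesgue measure but $\mu(A')=\mu_{ac}(A')+\mu_s(A')=0$, contradicting $\lambda|_I\ll\mu|_I$. Hence $p>0$ $\lambda$-a.e.\ on $I$, as required.

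For sufficiency and the explicit form, I would define $\tau_X(t):=\sigma^2 q(t)h(t)/p(t)$ on $I\cap\{p>0\}$ and $\tau_X\equiv 0$ elsewhere, and verify $(\ast)$ directly: on $\{p>0\}\cap I$ the relation $h\mu=p\lambda$ gives $\tau_X\,\mu=\sigma^2 q\,\lambda$, whereas on the $\mu_s$-support $h=0$ and thus $\tau_X=0$, and outside $I$ both sides of $(\ast)$ vanish up to a $\lambda$-null endpoint set. The remaining assertions then fall out of $(\ast)$: $\tau_X=0$ $\mu_s$-a.e.\ because $\mu_s\perp\lambda$, so in particular $\tau_X(c)=0$ whenever $\P(X=c)>0$, and the $\lambda$-a.e.\ positivity of $\tau_X$ on $\bar I$ follows from property (iii) together with the strict positivity of $p$. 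The main technical obstacles I foresee are (a) the careful Fubini bookkeeping underlying the key identity when $\mu$ has atoms or a singular continuous part inside $I$, and (b) verifying that the class $\{f'\,:\,f\in C^1,\ f'\text{ bounded}\}$ separates finite signed measures on $\B(\R)$, which should reduce to a standard approximation argument.
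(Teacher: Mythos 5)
Your proposal is correct and follows essentially the same strategy as the paper: derive by Fubini the key identity $\E[(X-m)f(X)]=\sigma^2\int_\R f'q\,d\lambda$, recognize $q$ as the non-zero-biased density of \eqref{densnz}, and reduce the Stein kernel condition to the single measure equation $\tau_X\,\mu=\sigma^2 q\,\lambda$ (the paper's Lemma~\ref{le1} states this as $\mu^{nz}=\sigma^{-2}\tau_X\mu$), from which uniqueness, nonnegativity, $\tau_X=0$ $\mu_s$-a.e.\ (Lemma~\ref{le2}), and the formula \eqref{formtau} all follow via the Lebesgue decomposition. The only cosmetic difference is that for necessity you argue by contradiction that $\lambda|_I\ll\mu|_I$ forces any version of $p$ to be positive $\lambda$-a.e.\ on $I$, whereas the paper's Lemma~\ref{le3} constructively exhibits $p=\sigma^2 q/\tau_X$ as a positive density; both are immediate consequences of the same measure identity.
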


\begin{remark}\label{mtrem}
\begin{enumerate}[(a)]
\item Recall that the \textit{(topological) support} $\supp(\nu)$ of a (nonnegative) measure $\nu$ on $\B(\R)$ may be defined as 
 \[\supp(\nu):=\bigcap_{C\in\mathcal{C_\nu}} C,\]
 where $\mathcal{C_\nu}$ is the collection of all closed subsets of $\R$ such that $\nu(\R\setminus C)=0$. In particular, $\supp(\nu)$ is a closed subset of $\R$ and by second-countability it further follows that $\nu(\R\setminus\supp(\nu))=0$.

Now note that the condition that $p=\frac{d\mu_{ac}}{d\lambda}$ is $\lambda$- a.e. strictly positive on $I$ in particular implies that the (topological) support of $\mu_{ac}$ (and, hence, that of $\mu$ as well) is equal to the closed interval $\bar{I}$. 
\item The converse statement, however, in general is not correct so that we cannot replace the condition that $p>0$ $\lambda$-a.e. in Theorem \ref{maintheo} with the condition that the support of $\mu_{ac}$ is equal to $\bar{I}$. For a counterexample, take an enumeration $(r_n)_{n\in\N}$ of the rational numbers and define the intervals $J_n:=[r_n-2^{-n},r_n+2^{-n}]$, $n\in\N$. Furthermore, let $p:=c \sum_{n=1}^\infty \1_{J_n}$, where $c>0$ is the normalizing constant. Then, $P:=\{x\in\R\,:\, p(x)>0\}=\bigcup_{n\in\N} J_n$, the topological support of $\mu=p\lambda$ is given by $\bar{P}=\R$, since the rational numbers are dense in $\R$, but $p$ is not $\lambda$-a.e. positive, because $\lambda(P)\leq\sum_{n=1}^\infty 2^{-n+1}=2$.
\item In \cite[Theorem 3.1]{NV} it is proved for a smooth and centered functional $X$ of an isonormal Gaussian process that the law $\mu$ of $X$ is absolutely continuous with respect to $\lambda$, if and only if $\tau_X(X)$ is $\P$-a.s. strictly positive, where $\tau_X$ is given in \eqref{skmall}. From Theorem \ref{maintheo} we can infer that this result cotinues to hold in general: If $X$ has a Stein kernel $\tau_X$, then $\mu$ is absolutely continuous, if and only if $\tau_X(X)$ is $\P$-a.s. strictly positive. Indeed, if there is $A\in\B(\R)$ such that $\mu_s(A)>0$, then $\P(\tau_X(X)=0)=\mu(\tau_X=0)\geq \mu(A)\geq\mu_s(A)>0$.
\end{enumerate}
\end{remark}
%\smallskip\\

The next result applies the findings of Theorem \ref{maintheo} to derive a rate-optimal, quantitative version of the classical CLT in total variation distance. 

\begin{theorem}\label{app}
Let the probability distribution $\mu$ be such that for $X\sim\mu$ it holds that $\E[X^2]<\infty$, $\sigma^2:=\Var(X)>0$ and assume further that $\mu_{ac}$ has a Lebesgue density, which is strictly positive on $I=(\essinf(X),\esssup(X))$. Moreover, suppose that, for some $n\in\N$, $X_1,\dotsc,X_n$, are independent random variables on a probability space $(\Om,\F,\P)$ such that $X_j\sim\mu$ for $1\leq j\leq n$ and define $S_n:=\sum_{j=1}^n X_j$. Then, with $m:=\E[X_1]$, $S_n^*:=\sigma^{-1}n^{-1/2}(S_n-nm)$ and $Z$ a standard normal random variable it holds that
\[d_{\TV}(S_n^*,Z)\leq  \frac{2}{\sigma^2\sqrt{n}}\Bigl(\Var\bigl(\tau_X(X)\bigr)\Bigr)^{1/2},\]
where $\tau_X$ is the (by Theorem \ref{maintheo} existing) Stein kernel for $X$.
\end{theorem}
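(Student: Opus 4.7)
The plan is to apply the Stein discrepancy bound \eqref{dtvbound} to the standardized sum $S_n^*$, which has mean zero and variance one. So the only real task is to produce an explicit Stein kernel for $S_n^*$ in terms of the individual Stein kernel $\tau_X$ (whose existence is guaranteed by Theorem \ref{maintheo}) and to then bound its variance.

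My candidate Stein kernel for the unstandardized sum $S_n$ is the conditional expectation
\begin{equation*}
\tau_{S_n}(s) := \E\Bigl[\sum_{j=1}^n \tau_X(X_j) \,\Bigl|\, S_n=s\Bigr].
\end{equation*}
To verify this, I would take an arbitrary continuously differentiable $f$ with bounded $f'$ and decompose
\begin{equation*}
\E\bigl[(S_n-nm)f(S_n)\bigr] = \sum_{j=1}^n \E\bigl[(X_j - m)f(S_n)\bigr].
\end{equation*}
For each summand, the independence of $X_j$ from $T_j := S_n - X_j$ lets me introduce the auxiliary function $g_j(x) := \E\bigl[f(x+T_j)\bigr]$, which by dominated convergence is again $C^1$ with $g_j'(x) = \E[f'(x+T_j)]$ bounded in $x$. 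Thus $g_j$ lies in the admissible class for the defining relation \eqref{defsk} of $\tau_X$, which yields $\E[(X_j - m)g_j(X_j)] = \E[\tau_X(X_j)g_j'(X_j)]$. Re-inserting the independence and summing over $j$, followed by one layer of conditioning on $S_n$, produces exactly $\E[\tau_{S_n}(S_n)f'(S_n)]$ on the right-hand side, as required.

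A trivial change of variables then gives the Stein kernel of $S_n^*$ as $\tau_{S_n^*}(y) = \tau_{S_n}(\sigma\sqrt{n}\,y + nm)/(\sigma^2 n)$, so
\begin{equation*}
\tau_{S_n^*}(S_n^*) = \frac{1}{\sigma^2 n}\, \E\Bigl[\sum_{j=1}^n \tau_X(X_j) \,\Bigl|\, S_n\Bigr].
\end{equation*}
Plugging this into \eqref{dtvbound}, using the conditional-variance inequality $\Var\bigl(\E[Y\mid\mathcal{G}]\bigr)\leq \Var(Y)$, and finally using independence of the $X_j$ to turn the variance of the sum into $n\Var(\tau_X(X))$, I arrive at
\begin{equation*}
d_{\TV}(S_n^*,Z) \leq 2\sqrt{\Var\bigl(\tau_{S_n^*}(S_n^*)\bigr)} \leq \frac{2}{\sigma^2 n}\sqrt{n\,\Var(\tau_X(X))} = \frac{2}{\sigma^2\sqrt{n}}\sqrt{\Var(\tau_X(X))}.
\end{equation*}

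The one non-routine step is justifying that $\tau_{S_n}$ as defined really is a Stein kernel for $S_n$, i.e.\ that we are allowed to feed the auxiliary $g_j$ into \eqref{defsk}; but this reduces to showing that $g_j\in C^1(\R)$ with bounded derivative, which is a standard dominated-convergence argument. Note that one could alternatively verify the existence of $\tau_{S_n}$ by checking that the law of $S_n$ satisfies the criterion of Theorem \ref{maintheo} (its absolutely continuous part has a density strictly positive on the underlying interval, via convolution), but the direct verification above is both cleaner and gives the explicit formula needed to control the variance.
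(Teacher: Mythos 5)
Your proof is correct and follows exactly the same route as the paper: construct the Stein kernel $\tau_{S_n^*}(S_n^*) = \frac{1}{n\sigma^2}\E\bigl[\sum_{j=1}^n\tau_X(X_j)\,\bigl|\,S_n^*\bigr]$, bound its variance by conditional Jensen together with independence, and apply the discrepancy bound \eqref{dtvbound}. The only difference is that the paper cites \cite{LRS} for the fact that this conditional expectation is a Stein kernel for $S_n^*$, whereas you supply the (standard and correct) verification via the auxiliary functions $g_j(x)=\E[f(x+T_j)]$ and the affine change of variables.
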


It was already proved in the paper \cite{SM} that a total variation bound in the classical CLT with rate of order $n^{-1/2}$ even holds under the minimal condition that $\mu_{ac}(\R)>0$. However, the constants in the bound from \cite{SM} are not explicit, whereas the variance of $\tau_X(X)$ might be easily computable in concrete situations. We refer to the article \cite{BCG} and the references therein for more details about known results in this direction.

\section{Proofs}\label{proofs}
 By the above observations, we may and will from now on focus on probability distributions $\mu$ for $X$ such that $\E[X^2]<\infty$ and $0<\sigma^2:=\Var(X)$. As before, we let $m:=\E[X]$. Furthermore, we continue to use the notation introduced in Subsection \ref{setup}.

An important tool for our proofs is the so-called \textit{$X$-non-zero biased distribution} associated to $X$ (or rather to $\mu$). By definition, this is another probability distribution $\mu^{nz}$ on $(\R,\B(\R))$ such that for $X^{nz}\sim\mu^{nz}$ one has
\begin{align}\label{xnzdef}
 \sigma^2\E\bigl[f'(X^{nz})\bigr]= \sigma^2 \int_\R f'(t)d\mu^{nz}(t)=\E\bigl[(X-m)f(X)\bigr]
\end{align}
for all continuously differentiable functions $f$ on $\R$ with bounded derivative $f'$. This distributional transformation is a straightforward extension of the well-known \textit{zero-bias transformation} by Goldstein and Reinert \cite{GolRei97} to not necessarily centered random variables, which has been introduced in the work \cite{Doe17} by the author. The random variable $X^{nz}$ may for instance be constructed as $X^{nz}:=m+Y$, where $Y$ has the $(X-m)$-zero biased distribution from \cite{GolRei97}.

Of the many interesting properties of this distributional transformation, we will only need the fact that $\mu^{nz}$ exists, is always absolutely continuous with respect to $\lambda$ and that
\begin{equation}\label{densnz}
   q(t):=\frac{1}{\sigma^2}\E\bigl[(X-m)\1_{\{X\geq t\}}\bigr],\quad t\in\R,
  \end{equation}
 is a probability density function of $\mu^{nz}$, which is strictly positive on $I$ and which vanishes on $\R\setminus\bar{I}$. In particular, the measures $\mu^{nz}$ and $\lambda$ are equivalent on the interval $\bar{I}$. These facts can easily be checked by means of direct computations, or else inferred from \cite[Lemma 2.1]{GolRei97} (see also \cite[Subsection 2.3.3]{CGS}). 
 \smallskip\\

We prepare the proof of Theorem \ref{maintheo} with three auxiliary statements.

 \begin{comment}
\begin{theorem}\label{maintheo}
As above, let $\mu$ be the distribution of a real-valued random variable $X$ on $(\Om,\F,\P)$ such that $\E|X|<\infty$ and $0<\sigma^2=\Var(X)<\infty$. Then, there exists a Stein kernel $\tau_X:\R\rightarrow\R$ for $X$, if and only if the support $\supp(\mu_{ac})$ of the absolutely continuous part $\mu_{ac}$ equals $\bar{I}$. In  of where $I=(a,b)$, $a=\essinf(X)$ and $b=\esssup(X)$.
%the following two conditions (i) and (ii) hold:
%\begin{enumerate}[{\normalfont (i)}]
%\item The support $\supp(\mu_{ac})$ of the absolutely continuous part $\mu_{ac}$ of $\mu$ equals the interval $\bar{I}$, where $I=(a,b)$, $a=\essinf(X)$ and $b=\esssup(X)$.
%\item The absolutely continuous part $\mu_{ac}$ of $\mu$ has a Lebesgue density $p:\R\rightarrow\R$ which is positive on the interval $\bar{I}$, where $I=(a,b)$, $a=\essinf(X)$ and $b=\esssup(X)$. In particular, its support $\supp(\mu_{ac})$ equals $\bar{I}$.
%\item 
%\end{enumerate}
 In this case, $\mu_{ac}$ has a Lebesgue density $p:\R\rightarrow\R$ which is positive on the interval $\bar{I}$, $\tau_X$ is $\mu$-a.s. uniquely determined, $\lambda$-a.e. positive on $\bar{I}$ and may be given by
\begin{equation*}
\tau_X=\sigma^2\frac{q}{p}h,
\end{equation*}
where
\begin{equation}
   q(t)=\frac{1}{\sigma^2}\E\bigl[(X-m)\1_{\{X\geq t\}}\bigr],\quad t\in\R,
  \end{equation}
	is a probability density function, which is positive on $\bar{I}$.
\end{theorem} 
\end{comment}

\begin{lemma}\label{le1}
Let $\tau:\R\rightarrow\R$ be Borel-measurable. Then, $\tau$ is a Stein kernel for $X$, if and only if $\mu^{nz}=\sigma^{-2} \tau\mu$. In particular, a Stein kernel $\tau_X$ for $X$ exists, if and only if $\mu^{nz}$ is absolutely continuous with respect to $\mu$. In this case, $\tau_X$ is $\mu$-a.s. uniquely determined, $\mu$-a.s. nonnegative and $\lambda$-a.e. positive on $\bar{I}$.  
\end{lemma}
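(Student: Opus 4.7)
My plan is to reduce the Stein kernel identity \eqref{defsk} to an equality of finite signed Borel measures and then read off every assertion using the Radon--Nikodym theorem together with the explicit density $q$ of $\mu^{nz}$ from \eqref{densnz}. Comparing \eqref{defsk} with the definition \eqref{xnzdef} of $X^{nz}$, a Borel function $\tau$ is a Stein kernel for $X$ iff
\[
\int_\R f'(t)\,\tau(t)\,d\mu(t)=\sigma^2\int_\R f'(t)\,d\mu^{nz}(t)
\]
for every $f\in C^1(\R)$ with $\fnorm{f'}<\infty$. Plugging in $f(x)=x$ yields $\E[\tau(X)]=\sigma^2<\infty$, which forces $\tau\in L^1(\mu)$, so that $\tau\mu$ is a well-defined finite signed measure.

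The key observation is that, via $f(x)=\int_0^x g(t)\,dt$, every bounded continuous function $g$ on $\R$ arises as $f'$ for some $f\in C^1(\R)$ with bounded derivative. Hence the Stein kernel identity is equivalent to
\[
\int_\R g\,\tau\,d\mu=\sigma^2\int_\R g\,d\mu^{nz}\quad\text{for every bounded continuous }g,
\]
and since bounded continuous functions separate finite signed Borel measures on $\R$, this is in turn equivalent to the measure equality $\tau\mu=\sigma^2\mu^{nz}$. This gives the first assertion of the lemma.

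All remaining claims then follow quickly. A Stein kernel exists iff there is a Borel $\tau$ with $\tau\mu=\sigma^2\mu^{nz}$, and by the Radon--Nikodym theorem this is equivalent to $\mu^{nz}\ll\mu$. Uniqueness $\mu$-a.e.\ holds because any two kernels induce the same finite signed measure on the Borel $\sigma$-algebra, and $\mu$-a.e.\ nonnegativity is automatic because $\sigma^2\mu^{nz}$ is a positive measure, so its $\mu$-density must be $\mu$-a.e.\ nonnegative.

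For $\lambda$-a.e.\ positivity of $\tau_X$ on $\bar{I}$ I would argue by contradiction: suppose $A\subseteq\bar{I}$ satisfies $\lambda(A)>0$ and $\tau_X\equiv 0$ on $A$. Then $\mu^{nz}(A)=\sigma^{-2}\int_A \tau_X\,d\mu=0$, while $\lambda(\bar{I}\setminus I)=0$ and the strict positivity of $q$ on $I$ give $\mu^{nz}(A)=\int_{A\cap I}q\,d\lambda>0$, a contradiction. The only mildly delicate step in the whole argument is ensuring at the outset that $\tau\mu$ is a finite signed measure, so that the bounded-continuous-function separation may be invoked; this is resolved precisely by the $L^1(\mu)$ bound extracted from $f(x)=x$.
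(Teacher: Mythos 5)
Your approach is essentially the same as the paper's: reduce \eqref{defsk} to the measure identity $\tau\mu=\sigma^2\mu^{nz}$ by testing against $f'=g$ for bounded continuous $g$, then read all the remaining claims off that identity. One nice touch is that you invoke \eqref{xnzdef} directly to identify $\E[(X-m)f(X)]$ with $\sigma^2\int f'\,d\mu^{nz}$, whereas the paper re-derives this via Fubini's theorem; your version is a bit more economical. Making explicit that $\tau\in L^1(\mu)$ (so that $\tau\mu$ is a genuine finite signed measure before the separation/Riesz argument is invoked) is also a worthwhile point.

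There is, however, a small gap in your treatment of $\lambda$-a.e.\ positivity on $\bar{I}$. You argue by contradiction from the existence of $A\subseteq\bar{I}$ with $\lambda(A)>0$ and $\tau_X\equiv 0$ on $A$, which only shows $\lambda(\{t\in\bar{I}:\tau_X(t)=0\})=0$. The claim, however, is $\lambda(\{t\in\bar{I}:\tau_X(t)\leq 0\})=0$. The $\mu$-a.e.\ nonnegativity you establish earlier does not close this: a Stein kernel is only determined $\mu$-a.e., so $\tau_X$ could a priori be strictly negative on a $\mu$-null but $\lambda$-positive subset of $\bar{I}$, and your contradiction as written does not address it. The fix is immediate and is exactly what the paper does: set $N=\{t\in\bar{I}:\tau_X(t)\leq 0\}$, note that $0\leq\mu^{nz}(N)=\sigma^{-2}\int_N\tau_X\,d\mu\leq 0$ forces $\mu^{nz}(N)=0$, and then use the equivalence of $\mu^{nz}$ with the restriction of $\lambda$ to $\bar{I}$ (from strict positivity of $q$ on $I$ and $\lambda(\bar{I}\setminus I)=0$) to conclude $\lambda(N)=0$.
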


\begin{proof}
Suppose first that $\tau$ is a Stein kernel for $X$ and let $f:\R\rightarrow\R$ be continuously differentiable with bounded derivative $f'$. Then, by assumption and by Fubini's theorem,
\begin{align*}
 &\int_\R f'(t)\tau(t)d\mu(t)=\E\bigl[\tau(X)f'(X)\bigr]=\E\bigl[(X-m)f(X)\bigr]=\E\bigl[(X-m)(f(X)-f(0))\bigr]\notag\\
 &=\E\biggl[(X-m)\int_0^X f'(t)dt\biggr]=\int_\R f'(t)\E\bigl[(X-m)\bigl(\1_{\{0\leq t\leq X\}}-\1_{\{X<t<0\}}\bigr)\bigr]dt\notag\\
&=\int_\R f'(t)\E\bigl[(X-m)\1_{\{X\geq t\}}\bigr]dt=\int_\R f'(t)\sigma^2 q(t)dt= \sigma^2 \int_\R f'(t)d\mu^{nz}(t).
 \end{align*}
 Hence, for all $g\in C_b(\R)$, the class of all bounded and continuous functions on $\R$, we have 
\begin{equation*}
 \int_\R g(t)\tau(t)d\mu(t)=\sigma^2 \int_\R g(t)d\mu^{nz}(t).
\end{equation*}
Thus, the Riesz representation theorem yields that the signed measure $\nu=\sigma^{-2}\tau\mu$ and the probability measure $ \mu^{nz}$ coincide, implying both nonnegativity and uniqueness of $\tau_X$  $\mu$-almost surely. Moreover, letting 
\begin{equation}\label{defN}
 N:=\bigl\{t\in \bar{I}\,:\,\tau(t)\leq0\bigr\}
\end{equation}
we have 
\begin{equation}\label{eq2}
 0\leq\mu^{nz}(N)=\sigma^{-2} \int_N \tau(t)d\mu(t)\leq0
\end{equation}
and, thus, $\mu^{nz}(N)=0$. Since, by the positivity of $q$ on $I$, $\mu^{nz}$ and the restriction of $\lambda$ to $\bar{I}$ are equivalent, it follows that $\lambda(N)=0$ as desired. 

Conversely, suppose now that $\mu^{nz}=\sigma^{-2} \tau\mu$ and let again $f$ be continuously differentiable with bounded derivative $f'$. Then,
\begin{align*}
\E\bigl[\tau(X)f'(X)\bigr]=\int_\R f'(t)\tau(t)d\mu(t)=\sigma^{2}\int_\R f'(t)d\mu^{nz}(t)
=\E\bigl[(X-m)f(X)\bigr],
\end{align*}
proving that $\tau$ is indeed a Stein kernel for $X$.
\end{proof}

\begin{lemma}\label{le2}
Suppose that $\tau_X:\R\rightarrow\R$ is a Stein kernel for $X$. Then, $\tau_X=0$ $\mu_s$-a.e. In particular, $\tau_X(c)=0$, whenever $\P(X=c)>0$.
\end{lemma}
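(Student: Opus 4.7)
The strategy is to extract absolute-continuity information for $\tau_X\mu$ from Lemma \ref{le1} and then compare it with the Lebesgue decomposition of $\mu$ itself. Specifically, Lemma \ref{le1} gives the measure identity $\sigma^{-2}\tau_X\mu=\mu^{nz}$, and since $\mu^{nz}$ admits the density $q$ with respect to $\lambda$, the (nonnegative, by Lemma \ref{le1}) measure $\tau_X\mu$ is absolutely continuous with respect to $\lambda$. This is the one structural fact that drives the whole proof.

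I would then choose a Borel set $A$ witnessing the mutual singularity of $\mu_s$ and $\lambda$, that is $\lambda(A)=0$ and $\mu_s(\R\setminus A)=0$. Absolute continuity of $\tau_X\mu$ with respect to $\lambda$ forces $(\tau_X\mu)(A)=0$, and decomposing $\mu=\mu_{ac}+\mu_s$ yields
\[
0=(\tau_X\mu)(A)=\int_A\tau_X\,d\mu_{ac}+\int_A\tau_X\,d\mu_s=\int_A\tau_X\,d\mu_s=\int_\R\tau_X\,d\mu_s,
\]
using $\mu_{ac}(A)=0$ and that $\mu_s$ is concentrated on $A$. Combined with the $\mu$-a.s.\ (hence $\mu_s$-a.s., because $\mu_s\le\mu$) nonnegativity of $\tau_X$ from Lemma \ref{le1}, this integral identity forces $\tau_X=0$ $\mu_s$-a.e., which is exactly the assertion of the lemma.

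The ``in particular'' statement then follows immediately: if $\P(X=c)>0$ then $c$ is an atom of $\mu_d$ and hence $\mu_s(\{c\})\ge\mu_d(\{c\})>0$, so any $\mu_s$-null set on which $\tau_X$ could fail to vanish cannot contain $c$, forcing $\tau_X(c)=0$.

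I do not anticipate a serious obstacle here; the only subtle point is to make sure that the nonnegativity of $\tau_X$ transfers from $\mu$ to $\mu_s$, which is harmless since $\mu_s\le\mu$, and to invoke Lemma \ref{le1} for both the identification $\sigma^{-2}\tau_X\mu=\mu^{nz}$ and the sign of $\tau_X$. Everything else is a clean application of the uniqueness of the Lebesgue decomposition applied to the measure $\tau_X\mu$.
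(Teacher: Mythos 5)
Your proposal is correct and rests on the same pillars as the paper's own argument: the identity $\sigma^{-2}\tau_X\mu=\mu^{nz}$ from Lemma \ref{le1}, the absolute continuity of $\mu^{nz}$ with respect to $\lambda$ via the density $q$, and the $\mu$-a.s.\ nonnegativity of $\tau_X$. The paper phrases the conclusion by introducing the set $P=\{t\in\bar I:\tau_X(t)>0\}$ and computing
$\mu_s(P)=\int_{A\cap P}\tau_X^{-1}\tau_X\,d\mu=\sigma^2\int_{A\cap P}\tau_X^{-1}\,d\mu^{nz}=0$,
which requires the slightly awkward $\tau_X^{-1}\tau_X$ insertion to pass from $\mu$ to $\mu^{nz}$ while staying on the set where $\tau_X>0$. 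Your route is a mild simplification: you show directly that $\int_\R\tau_X\,d\mu_s=0$ by evaluating the measure $\tau_X\mu=\sigma^2\mu^{nz}$ on the $\lambda$-null carrier $A$ of $\mu_s$ and splitting off the $\mu_{ac}$-part (which vanishes since $\mu_{ac}(A)=0$), and then you invoke nonnegativity to conclude $\tau_X=0$ $\mu_s$-a.e. This avoids the auxiliary set $P$ and the division trick entirely; the only care you correctly flag is that nonnegativity descends from $\mu$ to $\mu_s$ because $\mu_s\le\mu$. There is no gap, and I would regard your version as the cleaner writeup of the same idea.
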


\begin{proof}
Let $A\in\B(\R)$ be such that $\lambda(A)=0=\mu_s(\R\setminus A)$. With the set $N$ as in \eqref{defN} we let $P:=\bar{I}\setminus N=\{t\in \bar{I}\,:\, \tau_X(t)>0\}$. 
Since we already know from Lemma \ref{le1} that $\tau_X$ is nonnegative $\mu_s$-a.e., it remains to show that $\mu_s(P)=0$. Using that $\mu_{ac}(A)=\mu^{nz}(A)=0$, by Lemma \ref{le1} we have
\begin{align*}
\mu_s(P)&=\mu_s(A\cap P)=\mu(A\cap P)=\int_{A\cap P}\tau_X(t)^{-1}\tau_X(t)d\mu(t)\\
&=\sigma^2\int_{A\cap P}\tau_X(t)^{-1} d\mu^{nz}(t)=0,
\end{align*}
as claimed.
\end{proof}

\begin{lemma}\label{le3}
Suppose that $\tau_X:\R\rightarrow\R$ is a Stein kernel for $X$. Then, a Lebesgue density $p$ for the absolutely continuous part $\mu_{ac}$ of $\mu$ is given by 
\begin{equation}\label{densform}
   p(t):=\begin{cases}
          \frac{\sigma^2 q(t)}{\tau_X(t)}\,,&t\in \bar{I}\text{ and }\tau_X(t)\not=0\\
          0\,,&t\notin \bar{I}\text{ or }\tau_X(t)=0,
         \end{cases}
  \end{equation}
	where the probability density function $q$ of $\mu^{nz}$ has been defined in \eqref{densnz}.
	In particular, it holds that $p(t)>0$ for $\lambda$-a.e. $t\in I$. 
	\end{lemma}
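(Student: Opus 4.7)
The plan is to combine the measure-theoretic identity $\sigma^2\mu^{nz}=\tau_X\mu$ from Lemma \ref{le1} with the fact from Lemma \ref{le2} that $\tau_X$ vanishes $\mu_s$-almost everywhere, so that only the absolutely continuous part $\mu_{ac}$ contributes on the right-hand side. This will allow me to compare two Lebesgue densities and read off the claimed formula.

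First, I would note that by Lemma \ref{le2} and the decomposition $\mu=\mu_{ac}+\mu_s$, the (nonnegative) measure $\tau_X\mu$ equals $\tau_X\mu_{ac}$. Combined with Lemma \ref{le1}, this gives the measure identity
\[\sigma^2 q\,\la \;=\; \sigma^2\mu^{nz} \;=\; \tau_X\mu_{ac}.\]
Letting $p_0$ denote any Lebesgue density of $\mu_{ac}$, the right-hand side has Lebesgue density $\tau_X p_0$, so uniqueness of the Radon-Nikodym derivative yields the pointwise identity
\[\sigma^2 q(t)=\tau_X(t)\,p_0(t)\qquad\text{for }\la\text{-a.e.\ }t\in\R.\]

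Next, I would invoke the $\la$-a.e.\ positivity of $\tau_X$ on $\bar I$ from Lemma \ref{le1} in order to divide: for $\la$-a.e.\ $t\in\bar I$ one has $\tau_X(t)>0$, hence
\[p_0(t)=\frac{\sigma^2 q(t)}{\tau_X(t)}\quad\la\text{-a.e.\ on }\bar I.\]
Since $\mu_{ac}(\R\setminus\bar I)\le\mu(\R\setminus\bar I)=0$ by the very definition of $a=\essinf(X)$ and $b=\esssup(X)$, we also have $p_0=0$ $\la$-a.e.\ outside $\bar I$. Comparing with the definition of $p$ in \eqref{densform} and using once more that $\{t\in\bar I:\tau_X(t)=0\}$ is a $\la$-null set, I would conclude that $p=p_0$ $\la$-a.e., which means that $p$ is itself a Lebesgue density of $\mu_{ac}$.

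The concluding positivity claim is then immediate: on $I$ one has $q(t)>0$ by the construction of $\mu^{nz}$ recalled around \eqref{densnz}, while $\tau_X(t)>0$ $\la$-a.e.\ on $\bar I$ by Lemma \ref{le1}, so $p(t)=\sigma^2 q(t)/\tau_X(t)>0$ for $\la$-a.e.\ $t\in I$. The only delicate point is the division by $\tau_X$ on the set where it vanishes, which is handled cleanly by the $\la$-a.e.\ positivity statement in Lemma \ref{le1}; beyond that, everything reduces to standard Radon-Nikodym bookkeeping.
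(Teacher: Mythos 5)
Your proof is correct and relies on exactly the same ingredients as the paper's: the measure identity $\sigma^2\mu^{nz}=\tau_X\mu$ and the $\lambda$-a.e.\ positivity of $\tau_X$ on $\bar I$ from Lemma~\ref{le1}, combined with the vanishing of $\tau_X$ $\mu_s$-a.e.\ from Lemma~\ref{le2}. The paper phrases the verification set-by-set, showing $\int_B p\,d\lambda=\mu_{ac}(B)$ for every Borel set $B$ via a chain of equalities, whereas you pass to the pointwise identity $\sigma^2 q=\tau_X p_0$ $\lambda$-a.e.\ by invoking uniqueness of Radon--Nikodym derivatives and then rearrange; these are interchangeable formulations of the same argument, and your version is arguably a touch cleaner since it avoids the explicit case split on $\lambda(B)$.
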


\begin{proof}
We use the notation introduced in the proofs of Lemmas \ref{le1} and \ref{le2}. Let $B\in\B(\R)$ be given. If $\lambda(B)=0$, then 
\[0=\int_B p(t)dt=\mu_{ac}(B).\]
If, on the other hand, $\lambda(B)>0$ then we have $\mu_s(B\setminus N)=\mu_s((B\cap\bar{I})\setminus N)=\mu_s(B\cap P)=0$ and, hence, as $\lambda(N)=0$ by Lemma \ref{le1},
\begin{align*}
 \int_B p(t)dt&%=\int_{B\cap I} p(t)dt
=\int_{(B\cap \bar{I})\setminus N} p(t)dt=\int_{(B\cap \bar{I})\setminus N}\frac{\sigma^2 q(t)}{\tau_X(t)}dt
=\sigma^2\int_{(B\cap \bar{I})\setminus N}\tau_X(t)^{-1}d\mu^{nz}(t)\notag\\
%=\sigma^2\int_{B}\1_{\R\setminus N} (t)\tau_X(t)^{-1}d\mu^{nz}(t)\notag\\
&=\int_{(B\cap \bar{I})\setminus N} \tau_X(t)^{-1}\tau_X(t)d\mu(t)=\int_\R\1_{(B\cap \bar{I})\setminus N} d\mu(t)=\mu(B\setminus N)\\
 %&=\int_B \bigl(\1_{\R\setminus N} (t)\tau_X(t)^{-1}\bigr)\tau_X(t)d\mu(t)=\int_\R\1_{B\setminus N} d\mu(t)=\mu(B\setminus N)\\
 &=\mu_{ac}(B\setminus N)+\mu_s(B\setminus N) =\mu_{ac}(B\setminus N)=\mu_{ac}(B).
\end{align*}
The final equality follows from 
\[\mu_{ac}(B)=\mu_{ac}(B\setminus N)+\mu_{ac}(B\cap N)=\mu_{ac}(B\setminus N),\]
again as $\lambda(N)=0$. Hence, we have proved that $\mu_{ac}=p\lambda$. Finally, since $\tau_X$ is $\lambda$-a.e. positive on $\bar{I}$ by Lemma \ref{le1} and the same holds for $q$ it thus follows that also $p$ is $\lambda$-a.e. positive on $\bar{I}$.
%Now, suppose that $J\subseteq\R$ is an interval such that $J\subseteq S$. Then, by Lemma \ref{le2} we have $\tau_X(t)=0$ for $\mu_s$-a.e. $t\in J$
\end{proof}

\begin{proof}[Proof of Theorem \ref{maintheo}]
Suppose first that there exists a Stein kernel $\tau_X$ for $X$. Then, by Lemma \ref{le3}, the absolutely continuous part $\mu_{ac}$ of $\mu$ has a density $p$ which is positive on $I$. Moreover, the additional assertions on $\tau_X$ follow from Lemma \ref{le2}. 

Suppose now that, conversely, $\mu_{ac}$ has a Lebesgue density $p$ which is strictly positive on $I$ and define $\tau_X(t)$ by \eqref{formtau} for $t\in I$ and let $\tau_X(t)=0$ for $t\notin I$. Then, $\tau_X$ is Borel-measurable and nonnegative and the computation below with $f(x)=x$ shows that it is also $\mu$-integrable. Moreover, by the positivity of $p$ on $I$, the restriction $\lambda|I$ of $\lambda$ on $I$ is absolutely continuous with respect to $\mu_{ac}$ with density $\frac{d\lambda|I}{d\mu_{ac}}= p^{-1}|I$. Hence, for each continuously differentiable function $f$ on $\R$ with bounded derivative one has
\begin{align*}
&\E\bigl[\tau_X(X)f'(X)\bigr]=\int_\R\tau_X(t) f'(t) d\mu(t)=\int_I\tau_X(t) f'(t) d\mu(t)\\
&= \sigma^2\int_I \frac{q(t)}{p(t)}f'(t)h(t) d\mu(t)=\sigma^2\int_I \frac{q(t)}{p(t)}f'(t) d\mu_{ac}(t)=\sigma^2\int_I q(t)f'(t) d\lambda(t)\\
&=\sigma^2\int_\R q(t)f'(t) d\lambda(t)=\sigma^2 \int_\R f'(t) d\mu^{nz}(t)=\E\bigl[(X-m)f(X)\bigr].
\end{align*} 
Thus, $\tau_X$ is indeed a Stein kernel for $X$ as claimed.
\end{proof}

\begin{proof}[Proof of Theorem \ref{app}]
As on page 39 in \cite{LRS} we have that the function $\tau$ with 
\[\tau(x):=\frac{1}{n\sigma^2}\sum_{j=1}^n \E\bigl[\tau_X(X_j)\,\bigl|\, S_n^*=x\bigr],\quad x\in\R,\]
is a Stein kernel for $S_n^*$ and that 
\[\E\Bigl[\bigl(1-\tau(S_n^*)\bigr)^2\Bigr]\leq \frac{1}{n\sigma^4}\Var\bigl(\tau_X(X)\bigr).\]
Hence, the claim follows from the bound in \eqref{dtvbound}.
\end{proof}

\normalem
\bibliography{steinkernel}{}
\bibliographystyle{alpha}
\end{document}